\DeclareMathOperator{\dist}{dist}
\theoremstyle{plain}
\newtheorem{theorem}{Theorem}[section]
\newtheorem{proposition}[theorem]{Proposition}
\newtheorem{lemma}[theorem]{Lemma}
\newtheorem{corollary}[theorem]{Corollary}
\theoremstyle{definition}
\theoremstyle{remark}
\newcommand{\abs}[1]{\ensuremath\left\vert#1\right\vert}
\newcommand{\norm}[1]{\ensuremath\left\Vert#1\right\Vert}
\begin{document}

\title{Strongly zero product determined Banach algebras}

\author{J. Alaminos}
\address{Departamento de An\' alisis
Matem\' atico\\ Fa\-cul\-tad de Ciencias\\ Universidad de Granada\\
18071 Granada, Spain} 
\email{alaminos@ugr.es}
\author{J. Extremera}
\address{Departamento de An\' alisis
Matem\' atico\\ Fa\-cul\-tad de Ciencias\\ Universidad de Granada\\
18071 Granada, Spain}
\email{jlizana@ugr.es}
\author{M. L. C. Godoy}
\address{Departamento de An\' alisis
Matem\' atico\\ Fa\-cul\-tad de Ciencias\\ Universidad de Granada\\
18071 Granada, Spain} 
\email{mgodoy@ugr.es}
\author{A.\,R. Villena}
\address{Departamento de An\' alisis
Matem\' atico\\ Fa\-cul\-tad de
Ciencias\\ Universidad de Granada\\
18071 Granada, Spain} 
\email{avillena@ugr.es}

\begin{abstract}
   $C^*$-algebras, group algebras, and the algebra $\mathcal{A}(X)$ of approximable operators on a Banach space $X$ having the bounded approximation property are known to be zero product determined.
   We are interested in giving a quantitative estimate of this property by finding, for each Banach algebra $A$ of the above classes, a constant $\alpha$ with the property that for every continuous bilinear functional $\varphi\colon A \times A\to\mathbb{C}$ there exists a continuous linear functional $\xi$ on $A$ such that
   \[
      \sup_{\Vert a\Vert=\Vert b\Vert=1}\vert\varphi(a,b)-\xi(ab)\vert\le
      \alpha\sup_{\mathclap{\substack{\Vert a\Vert=\Vert b\Vert=1, \\ ab=0}}}\vert\varphi(a,b)\vert.
   \]
\end{abstract}

\subjclass[2010]{47H60, 42A20, 47L10}
   \keywords
   {Zero product determined algebra, group algebra, algebra of approximable operators}

\thanks{
The authors were supported by MCIU/AEI/FEDER Grant PGC2018-093794-B-I00,
Junta de Andaluc\'{\i}a grant FQM-185, and
Proyectos I+D+i del programa operativo FEDER-Andaluc\'{\i}a  Grant A-FQM-48-UGR18.
The third named author was also supported by MIU PhD scholarship Grant FPU18/00419.
}

\maketitle

\section{Introduction}

Let $A$ be a Banach algebra. Then $\pi\colon A\times A\to A$ denotes the product map, we write $A^*$ for the dual of $A$, and $\mathcal{B}^2(A;\mathbb{C})$ for the space of continuous bilinear functionals on $A$.

The algebra $A$ is said to be \emph{zero product determined} if
every  $\varphi\in\mathcal{B}^2(A;\mathbb{C})$ with the property
\begin{equation}\label{15}
   a,b\in A, \ ab=0 \ \Rightarrow \ \varphi(a,b)=0
\end{equation}
belongs to the space
\[
   \mathcal{B}^2_\pi(A;\mathbb{C})=
   \bigl\{\xi\circ\pi : \xi\in A^*\bigr\}.
\]
This concept implicitly appeared in~\cite{ABEV0} as an additional outcome of the so-called property $\mathbb{B}$ which was introduced in that paper, and was the basis of subsequent Jordan and Lie versions (see~\cite{B1, B2, B3}). The algebra $A$ is said to have \emph{property} $\mathbb{B}$ if every $\varphi\in\mathcal{B}^2(A;\mathbb{C})$ satisfying~\eqref{15} belongs to the closed subspace $\mathcal{B}^2_b(A;\mathbb{C})$ of $\mathcal{B}^2(A;\mathbb{C})$ defined by
\[
   \mathcal{B}^2_b(A;\mathbb{C})=
   \bigl\{\psi\in\mathcal{B}^2(A;\mathbb{C}) : \psi(ab,c)=\psi(a,bc) \ \forall a,b,c\in A\bigr\}.
\]
In~\cite{ABEV0} it was shown that this class of Banach algebras is wide enough to include a number of examples of interest: $C^*$-algebras, the group algebra $L^1(G)$ of any locally compact group $G$, and the algebra $\mathcal{A}(X)$ of approximable operators on any Banach space $X$.

Throughout, we confine ourselves to Banach algebras having a bounded left approximate identity. Then $\mathcal{B}^2_\pi(A;\mathbb{C})=\mathcal{B}^2_b(A;\mathbb{C})$ (Proposition~\ref{913}), 
and hence $A$ is a zero product determined Banach algebra if and only if $A$ has property $\mathbb{B}$. For example, this applies to $C^*$-algebras, group algebras and the algebra $\mathcal{A}(X)$
on any Banach space $X$ having the bounded approximation property,
so that all of them are zero product determined.

For each $\varphi\in\mathcal{B}^2(A;\mathbb{C})$,
the distance from $\varphi$ to $\mathcal{B}^2_\pi(A;\mathbb{C})$ is
\[
   \dist\left(\varphi,\mathcal{B}^2_\pi(A;\mathbb{C})\right)=
   \inf\left\{\norm{\varphi - \psi} :  \psi\in\mathcal{B}^2_\pi(A;\mathbb{C})\right\},
\]
which can be easily estimated through the constant
\begin{equation*}
   \abs{\varphi}_b =
   \sup\left\{\abs{\varphi(ab,c)-\varphi(a,bc)} :  a,b,c\in A, \ \Vert a\Vert=\Vert b\Vert=\Vert c\Vert=1\right\}
\end{equation*}
(Proposition~\ref{913} below).
Our purpose is to estimate $\dist \left(\varphi,\mathcal{B}^2_\pi(A;\mathbb{C})\right)$ through the constant
\[
   \abs{\varphi}_{zp}=
   \sup\left\{\abs{\varphi(a,b)} :  a,b\in A, \ \norm{a} = \norm{b} = 1, \ ab=0\right\}.
\]
Note that $A$ is zero product determined precisely when
\begin{equation}\label{21}
   \varphi\in\mathcal{B}^2(A;\mathbb{C}), \ \abs{\varphi}_{zp}=0 \ \Rightarrow \ \varphi\in\mathcal{B}^2_\pi(A;\mathbb{C}).
\end{equation}
We call $A$  \emph{strongly zero product determined} if condition~\eqref{21} is strengthened by requiring that there is a distance estimate
\begin{equation}\label{22}
   \dist \bigl(\varphi,\mathcal{B}^2_\pi(A;\mathbb{C})\bigr)\le
   \alpha \abs{\varphi}_{zp}\quad\forall \varphi\in\mathcal{B}^2(A;\mathbb{C})
\end{equation}
for some constant $\alpha$; in this case, the optimal constant $\alpha$ for which~\eqref{22} holds will be denoted by $\alpha_A$.
The inequality $\abs{\varphi}_{zp} \le \dist \left(\varphi,\mathcal{B}^2_\pi(A;\mathbb{C})\right)$ is always true (Proposition~\ref{913} below).
We also note that $A$ has property $\mathbb{B}$ exactly in the case when
\begin{equation}\label{23}
   \varphi\in\mathcal{B}^2(A;\mathbb{C}), \ \abs{\varphi}_{zp}=0 \ \Rightarrow \ \abs{\varphi}_b=0,
\end{equation}
and the algebra $A$ is said to have the \emph{strong property} $\mathbb{B}$ if there is an estimate
\begin{equation}\label{24}
   \vert\varphi\vert_{b} \le \beta \abs{\varphi}_{zp}\quad \forall \varphi\in\mathcal{B}^2(A;\mathbb{C})
\end{equation}
for some constant $\beta$; in this case, the optimal constant $\beta$ for which~\eqref{24} holds will be denoted by $\beta_A$.
The inequality $\abs{\varphi}_{zp}\le M \abs{\varphi}_b$ is always true for some constant~$M$ (Proposition~\ref{913} below).
The spirit of this concept first appeared in~\cite{Vil}, and was subsequently formulated in~\cite{SS2, SS}.
This property has proven to be useful to study the hyperreflexivity of the spaces of derivations and continuous cocycles on $A$ 
(see~\cite{SS2, SS, Vil1, Vil2, Sam1}).

From~\cite[Corollary~1.3]{Vil3}, it follows that each $C^*$-algebra $A$ is strongly zero product determined, has the strong property $\mathbb{B}$, and $\alpha_A,\beta_A\le 8$.
It is shown in~\cite{SS} that each group algebra has the strong property $\mathbb{B}$
and so (by Corollary~\ref{914} below) it is also strongly zero product determined.
In this paper we give, for each group algebra, a sharper estimate for the constant of the strong property $\mathbb{B}$ to the one given in~\cite[Theorem~3.4]{SS}.
Finally, we prove that the algebra $\mathcal{A}(X)$ is strongly zero product determined for each Banach space $X$ having property $(\mathbb{A})$
(which is a rather strong approximation property for the space $X$).

Throughout, our reference for Banach algebras, and particularly for group algebras, is the monograph~\cite{D}.

\section{Elementary estimates}

In this section, we gather together some estimates that relate the seminorms
$\dist \left( \cdot ,\mathcal{B}^2_\pi(A;\mathbb{C})\right)$,
$\abs{\cdot}_{b}$, and $\abs{\cdot}_{zp}$ on $\mathcal{B}^2_\pi(A;\mathbb{C})$
to each other.

\begin{proposition}\label{913}
   Let $A$ be a Banach algebra with a left approximate identity of bound $M$.
   Then
   $\mathcal{B}^2_\pi(A;\mathbb{C})=\mathcal{B}^2_b(A;\mathbb{C})$
   and, for each $\varphi\in\mathcal{B}^2(A;\mathbb{C})$, the following properties hold:
   \begin{enumerate}
      \item[\rm (i)]
         The distance $\dist \left(\varphi,\mathcal{B}^2_\pi(A;\mathbb{C})\right)$
         is attained;
      \item[\rm (ii)]
         $\frac{1}{2} \abs{\varphi}_b\le
            \dist \left(\varphi,\mathcal{B}^2_\pi(A;\mathbb{C})\right)\le
            M \abs{\varphi}_b$;
      \item[\rm (iii)]
         $\abs{\varphi}_{zp} \le \dist \left(\varphi,\mathcal{B}^2_\pi(A;\mathbb{C})\right)$.
   \end{enumerate}
\end{proposition}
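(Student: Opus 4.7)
The plan is to build everything from a single construction using the bounded left approximate identity $(e_\lambda)$ and a refining ultrafilter $\mathcal{U}$. First I would settle the equality $\mathcal{B}^2_\pi(A;\mathbb{C})=\mathcal{B}^2_b(A;\mathbb{C})$. The inclusion $\mathcal{B}^2_\pi\subseteq\mathcal{B}^2_b$ is immediate from $(\xi\circ\pi)(ab,c)=\xi(abc)=(\xi\circ\pi)(a,bc)$. For the reverse inclusion, given $\psi\in\mathcal{B}^2_b$ I set
\[
\xi(a)=\lim_{\mathcal{U}}\psi(e_\lambda,a),
\]
which defines an element of $A^*$ of norm at most $M\|\psi\|$. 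Combining the defining identity of $\mathcal{B}^2_b$ with the norm convergence $e_\lambda a\to a$ and the continuity of $\psi$ yields
\[
\xi(ab)=\lim_{\mathcal{U}}\psi(e_\lambda,ab)=\lim_{\mathcal{U}}\psi(e_\lambda a,b)=\psi(a,b),
\]
so $\psi=\xi\circ\pi\in\mathcal{B}^2_\pi$.

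Next I would deduce the quantitative parts. For the upper bound in (ii) I apply the same recipe to an arbitrary $\varphi\in\mathcal{B}^2(A;\mathbb{C})$: setting $\xi(a)=\lim_{\mathcal{U}}\varphi(e_\lambda,a)\in A^*$ and $\psi:=\xi\circ\pi\in\mathcal{B}^2_\pi$, and using $\varphi(a,b)=\lim_{\mathcal{U}}\varphi(e_\lambda a,b)$ (now from the continuity of $\varphi$ alone), one obtains
\[
(\varphi-\psi)(a,b)=\lim_{\mathcal{U}}\bigl[\varphi(e_\lambda a,b)-\varphi(e_\lambda,ab)\bigr],
\]
whose right-hand side is bounded in absolute value by $M|\varphi|_b\|a\|\|b\|$; this gives $\dist(\varphi,\mathcal{B}^2_\pi)\le\|\varphi-\psi\|\le M|\varphi|_b$. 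For the lower bound, each $\psi\in\mathcal{B}^2_\pi$ satisfies $|\psi|_b=0$, and the trivial bound $|\eta|_b\le 2\|\eta\|$ applied to $\eta=\varphi-\psi$ yields $|\varphi|_b\le 2\|\varphi-\psi\|$, whence $\tfrac12|\varphi|_b\le\dist(\varphi,\mathcal{B}^2_\pi)$. Part (iii) is even more immediate: each $\psi\in\mathcal{B}^2_\pi$ vanishes on pairs with $ab=0$, so $|\varphi(a,b)|=|(\varphi-\psi)(a,b)|\le\|\varphi-\psi\|$ under the constraints $\|a\|=\|b\|=1$ and $ab=0$.

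For the attainment in (i) my approach is via duality. Under the canonical identification $\mathcal{B}^2(A;\mathbb{C})\cong(A\widehat{\otimes}A)^*$, let $\pi_*\colon A\widehat{\otimes}A\to A$ denote the continuous extension of the product; the first paragraph translates into $\mathcal{B}^2_\pi=(\ker\pi_*)^\perp$, so $\mathcal{B}^2_\pi$ is the annihilator of a closed subspace inside a dual Banach space. A Hahn--Banach extension $\chi\in(A\widehat{\otimes}A)^*$ of $\varphi|_{\ker\pi_*}$ with $\|\chi\|=\|\varphi|_{\ker\pi_*}\|$ then satisfies $\varphi-\chi\in\mathcal{B}^2_\pi$ and $\|\varphi-(\varphi-\chi)\|=\|\chi\|=\dist(\varphi,\mathcal{B}^2_\pi)$, so the distance is attained. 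The principal subtlety of the whole argument sits in the first paragraph: producing a genuine $\xi\in A^*$ from $\psi$ uses both property $b$ and the left approximate identity in an essential way, whereas, once that identification is in hand, (i)--(iii) reduce to routine bookkeeping.
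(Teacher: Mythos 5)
Your proposal is correct, and most of it is the paper's own argument in different clothing: your ultrafilter limit $\xi(a)=\lim_{\mathcal{U}}\varphi(e_\lambda,a)$ is exactly the paper's weak$^*$-accumulation point of the net $\xi_\lambda=\varphi(e_\lambda,\cdot)$ used for the upper bound in (ii), and your lower bound in (ii) and your (iii) coincide with the paper's. Where you genuinely diverge is in the logical order and in (i). The paper proves (i) by taking a minimizing sequence $(\xi_n)\subset A^*$, using the left approximate identity to get the uniform bound $\norm{\xi_n}\le M\norm{\xi_n\circ\pi}$, extracting a weak$^*$-accumulation point via Banach--Alaoglu, and only afterwards deduces $\mathcal{B}^2_\pi(A;\mathbb{C})=\mathcal{B}^2_b(A;\mathbb{C})$ from (i) and (ii); you prove the equality first, directly, and then obtain (i) structurally through the identification $\mathcal{B}^2(A;\mathbb{C})\cong(A\widehat{\otimes}A)^*$ and the standard Hahn--Banach fact that the distance to an annihilator is attained and equals $\norm{\varphi\mid_{\ker\pi_*}}$. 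One sentence is worth adding at the step ``the first paragraph translates into $\mathcal{B}^2_\pi=(\ker\pi_*)^\perp$'': a functional vanishing on $\ker\pi_*$ a priori only gives membership in $\mathcal{B}^2_b$ (because each $(ab)\otimes c-a\otimes(bc)$ lies in $\ker\pi_*$, whose kernel could be strictly larger than the closed span of such elements), so the correct chain is $\mathcal{B}^2_\pi\subseteq(\ker\pi_*)^\perp\subseteq\mathcal{B}^2_b$, collapsed by your first paragraph; this is a one-line check, not a gap. Your route buys a cleaner conceptual reason for attainment ($\mathcal{B}^2_\pi$ becomes an annihilator, hence weak$^*$-closed, once a bounded left approximate identity is present) together with the sharper identity $\dist\left(\varphi,\mathcal{B}^2_\pi(A;\mathbb{C})\right)=\norm{\varphi\mid_{\ker\pi_*}}$, which the paper never records; the paper's route avoids the projective tensor product altogether and keeps the whole proof inside $\mathcal{B}^2(A;\mathbb{C})$ at the cost of the compactness argument for the minimizing sequence.
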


\begin{proof}
   Let ${(e_\lambda)}_{\lambda\in\Lambda}$ be a left approximate identity of bound $M$.

   (i)
   Let $(\xi_n)$ be a sequence in $A^*$ such that
   \[
      \dist \left(\varphi,\mathcal{B}^2_\pi(A;\mathbb{C})\right) =
      \lim_{n\to\infty}\norm{\varphi-\xi_n\circ\pi}.
   \]
   For each $n\in\mathbb{N}$ and $a\in A$, we have
   \[
      \abs{\xi_n(e_\lambda a)} = 
      \abs{(\xi_n\circ\pi)(e_\lambda,a)} \le
      M \norm{\xi_n\circ\pi} \norm{a} \quad \forall \lambda\in\Lambda
   \]
   and hence, taking limit in the above inequality  and using that $\lim_{\lambda\in\Lambda}e_\lambda a=a$, we see that
   $\abs{\xi_n(a)} \le M \norm{\xi_n\circ\pi} \norm{a}$,
   which shows that $\norm{\xi_n} \le M \norm{\xi_n\circ\pi}$.
   Further, since
   \[
      \Vert\xi_n\circ\pi\Vert\le\Vert\varphi-\xi_n\circ\pi\Vert+\Vert\varphi\Vert\quad\forall n\in\mathbb{N},
   \]
   it follows that the sequence $(\Vert\xi_n\Vert)$ is bounded.
   By the Banach--Alaoglu theorem, the sequence $(\xi_n)$ has a weak$^*$-accumulation point, say $\xi$, in $A^*$.
   Let ${(\xi_\nu)}_{\nu\in N}$ be a subnet of $(\xi_n)$ such that $\text{w}^*$-$\lim_{\nu\in N}\xi_\nu=\xi$.
   The task is now to show that
   \[
      \norm{\varphi-\xi\circ\pi} = \dist \left(\varphi,\mathcal{B}^2_\pi(A;\mathbb{C})\right).
   \]
   For each $a,b\in A$ with $\Vert a\Vert=\Vert b\Vert=1$, we have
   \[
      \abs{\varphi(a,b)-\xi_\nu(ab)} \le \norm{\varphi-\xi_\nu\circ\pi} \quad\forall \nu\in N,
   \]
   and so, taking limits on both sides of the above inequality and using
   that
   \[
      \lim_{\nu\in N}\xi_\nu(ab)=\xi(ab)
   \]
   and that $\left(\Vert\varphi-\xi_\nu\circ\pi\Vert\right)_{\nu\in N}$ is a subnet of the convergent sequence $(\Vert\varphi-\xi_n\circ\pi\Vert)$,
   we obtain
   \[
      \vert\varphi(a,b)-\xi(ab)\vert \le \dist \left(\varphi,\mathcal{B}^2_\pi(A;\mathbb{C})\right).
   \]
   This implies that $\norm{\varphi-\xi\circ\pi} \le \dist \left(\varphi,\mathcal{B}^2_\pi(A;\mathbb{C})\right)$, and the converse inequality
   $\dist \left(\varphi,\mathcal{B}^2_\pi(A;\mathbb{C})\right) \le \norm{\varphi-\xi\circ\pi}$
   trivially holds.

   (ii)
   For each $\lambda\in\Lambda$ define $\xi_\lambda\in A^*$ by
   \[
      \xi_\lambda(a)=\varphi(e_\lambda,a)\quad\forall a\in A.
   \]
   Then $\norm{\xi_\lambda} \le M \norm{\varphi}$ for each $\lambda\in\Lambda$, so that
   ${(\xi_\lambda)}_{\lambda\in\Lambda}$ is a bounded net in $A^*$ and hence the
   Banach--Alaoglu theorem shows that it has a weak$^*$-accumulation point, say $\xi$, in $A^*$.
   Let ${(\xi_\nu)}_{\nu\in N}$ be a subnet of $(\xi_\lambda)_{\lambda\in\Lambda}$ such that
   $\text{w}^*$-$\lim_{\nu\in N}\xi_\nu=\xi$.
   For each $a,b\in A$ with $\Vert a\Vert=\Vert b\Vert=1$, we have
   \[
      \abs{\varphi(e_\nu a,b)-\varphi(e_\nu,ab)} \le M \abs{\varphi}_b 
      \quad\forall \nu\in N
   \]
   and hence, taking limit and using that ${(e_\nu a)}_{\nu\in N}$ is a subnet of the
   convergent net ${(e_\lambda a)}_{\lambda\in\Lambda}$ and that
   $\lim_{\nu\in N}\varphi(e_\lambda,ab)=\xi(ab)$, we see that
   \[
      \abs{\varphi(a,b)-\xi(ab)} \le M \abs{\varphi}_b.
   \]
   This gives $\Vert\varphi-\xi\circ\pi\Vert\le M\abs{\varphi}_b$,
   whence
   \[
      \dist \left(\varphi,\mathcal{B}^2_\pi(A;\mathbb{C})\right)\le M\vert\varphi\vert_b.
   \]

   Set $\xi\in A^*$.
   For each $a,b,c\in A$ with $\Vert a\Vert=\Vert b\Vert=\Vert c\Vert=1$, we have
   \begin{equation*}
      \begin{split}
         \abs{\varphi(ab,c)-\varphi(a,bc)} &
         = \abs{\varphi(ab,c)-(\xi\circ\pi)(ab,c)+(\xi\circ\pi)(a,bc)-\varphi(a,bc)} \\
         & \le \abs{\varphi(ab,c)-(\xi\circ\pi)(ab,c)}+\abs{(\xi\circ\pi)(a,bc)-\varphi(a,bc)} \\
         & \le \norm{\varphi-\xi\circ\pi} \norm{ab} \norm{c} +
         \norm{\varphi-\xi\circ\pi} \norm{a} \norm{bc} \\
         & \le 2 \norm{\varphi-\xi\circ\pi}
      \end{split}
   \end{equation*}
   and therefore
   $\abs{\varphi}_b \le 2 \norm{\varphi-\xi\circ\pi}$.
   Since this inequality holds for each $\xi\in A^*$, it follows that
   \[
      \abs{\varphi}_b\le 2\dist \left(\varphi,\mathcal{B}^2_\pi(A;\mathbb{C})\right).
   \]

   (iii)
   Let $a,b\in A$ with $\Vert a\Vert=\Vert b\Vert=1$ and $ab=0$.
   For each $\xi\in A^*$,  we see that
   \[
      \abs{\varphi(a,b)} = \abs{\varphi(a,b)-(\xi\circ\pi)(a,b)} \le
      \norm{\varphi-\xi\circ\pi},
   \]
   and consequently
   $\abs{\varphi}_{zp} \le \norm{\varphi-\xi\circ\pi}$. 
   Since the above inequality holds for each $\xi\in A^*$, we conclude that
   \[
      \abs{\varphi}_{zp}
      \le
      \dist \left(\varphi,\mathcal{B}^2_\pi(A;\mathbb{C})\right).
   \]

Finally, it is clear that $\mathcal{B}^2_\pi(A;\mathbb{C})\subset\mathcal{B}^2_b(A;\mathbb{C})$. 
To prove the reverse inclusion take $\varphi\in\mathcal{B}^2_b(A;\mathbb{C})$.  Then $\vert\varphi\vert_b=0$, 
hence (ii) shows that $\dist \left(\varphi,\mathcal{B}^2_\pi(A;\mathbb{C})\right) = 0$, and (i) gives 
$\psi\in\mathcal{B}^2_\pi(A;\mathbb{C})$ such that  $\Vert\varphi-\psi\Vert=0$, which implies that 
$\varphi=\psi\in\mathcal{B}^2_\pi(A;\mathbb{C})$. 
\end{proof}

The following result is an immediate consequence of assertion (ii) in Proposition~\ref{913}.

\begin{corollary}\label{914}
   Let $A$ be a Banach algebra with a left approximate identity of bound~$M$.
   Then $A$ is strongly zero product determined if and only if has the strong property $\mathbb{B}$, in which case
   \[
      \tfrac{1}{2}\beta_A\le\alpha_A\le M\beta_A.
   \]
\end{corollary}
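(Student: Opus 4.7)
The plan is to chain together the two-sided estimates from Proposition~\ref{913} parts (ii) and (iii). Specifically, Proposition~\ref{913}(ii) yields the ``sandwich''
\[
   \tfrac{1}{2}\abs{\varphi}_b \le \dist\bigl(\varphi,\mathcal{B}^2_\pi(A;\mathbb{C})\bigr) \le M\abs{\varphi}_b
\]
for every $\varphi\in\mathcal{B}^2(A;\mathbb{C})$. Since the definitions of strongly zero product determined and of strong property~$\mathbb{B}$ differ only in whether the left-hand side is $\dist\bigl(\varphi,\mathcal{B}^2_\pi(A;\mathbb{C})\bigr)$ or $\abs{\varphi}_b$ (the right-hand side being $\abs{\varphi}_{zp}$ in both cases), this sandwich immediately transfers one inequality into the other.

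Concretely, I would argue in two directions. First, suppose $A$ has the strong property~$\mathbb{B}$, so $\abs{\varphi}_b\le\beta_A\abs{\varphi}_{zp}$ for all $\varphi$; then the upper half of the sandwich gives
\[
   \dist\bigl(\varphi,\mathcal{B}^2_\pi(A;\mathbb{C})\bigr) \le M\abs{\varphi}_b \le M\beta_A\abs{\varphi}_{zp},
\]
so $A$ is strongly zero product determined with $\alpha_A\le M\beta_A$. Conversely, suppose $A$ is strongly zero product determined with constant $\alpha_A$; then the lower half of the sandwich gives
\[
   \abs{\varphi}_b \le 2\dist\bigl(\varphi,\mathcal{B}^2_\pi(A;\mathbb{C})\bigr) \le 2\alpha_A\abs{\varphi}_{zp},
\]
so $A$ has the strong property~$\mathbb{B}$ with $\beta_A\le 2\alpha_A$, i.e.\ $\tfrac{1}{2}\beta_A\le\alpha_A$.

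There is essentially no obstacle here: the corollary is purely a bookkeeping consequence of the already-proved sandwich in Proposition~\ref{913}(ii), and the optimality of $\alpha_A$ and $\beta_A$ (as infima of valid constants) lets one pass the inequalities directly to the optimal constants without any extra argument. The only thing worth mentioning explicitly is that finiteness of one optimal constant forces finiteness of the other, which is exactly what the two implications above establish.
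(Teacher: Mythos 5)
Your proof is correct and is exactly the paper's argument: the paper states that the corollary is an immediate consequence of the two-sided estimate in Proposition~\ref{913}(ii), which is precisely the sandwich you chain with the definitions of the optimal constants $\alpha_A$ and $\beta_A$. (Your passing mention of part (iii) is unnecessary but harmless, since only (ii) is actually used.)
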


\section{Group algebras}

In this section we prove that the group algebra $L^1(G)$ of each locally compact group $G$ is strongly zero product determined and we provide an estimate of the constants $\alpha_{L^1(G)}$ and $\beta_{L^1(G)}$.
Our estimate of $\beta_{L^1(G)}$ improves the one given in~\cite{SS}.
For the basic properties of this important class of Banach algebras we refer the reader to \cite[Section~3.3]{D}.

Throughout this section, $\mathbb{T}$ denotes the circle group, and we consider the normalized Haar measure on $\mathbb{T}$. We write $A(\mathbb{T})$ and $A(\mathbb{T}^2)$ for the Fourier algebras of $\mathbb{T}$ and $\mathbb{T}^2$, respectively. For each $f\in A(\mathbb{T})$, $F\in A(\mathbb{T}^2)$, and $j,k\in\mathbb{Z}$,
we write $\widehat{f}(j)$ and $\widehat{F}(j,k)$ for the Fourier coefficients of $f$ and $F$, respectively.
Let $\mathbf{1},\zeta\in A(\mathbb{T})$ denote the functions defined by
\[
   \mathbf{1}(z)=1,\quad 
   \zeta(z)=z\quad\forall z\in\mathbb{T}.
\]
Let $\Delta\colon A(\mathbb{T}^2)\to A(\mathbb{T})$ be the bounded linear map defined by
\[
   \Delta(F)(z)=F(z,z) \quad
   \forall z\in\mathbb{T}, \ \forall F\in A(\mathbb{T}^2).
\]
For $f,g\in A(\mathbb{T})$, let $f\otimes g\colon \mathbb{T}^2\to\mathbb{C}$
denote the function defined by
\[
   (f\otimes g)(z,w)=f(z)g(w)\quad \forall z,w\in \mathbb{T},
\]
which is an element of $A(\mathbb{T}^2)$ with $\norm{f\otimes g} = \norm{f} \norm{g}$.

\begin{lemma}\label{l1}
   Let $\Phi\colon A(\mathbb{T}^2)\to\mathbb{C}$ be a continuous linear functional, and
   let the constant $\varepsilon\ge 0$ be such that
   \begin{equation*}
      f,g\in A(\mathbb{T}), \ fg=0 \ \Rightarrow \
      \abs{\Phi(f\otimes g)} \le \varepsilon \norm{f} \norm{g}.
   \end{equation*}
   Then
   \begin{equation*}
      \abs{\Phi(\zeta\otimes\mathbf{1}-\mathbf{1}\otimes\zeta)} \le
      \norm{\Phi\mid_{\ker\Delta}} 2\sin\tfrac{\pi}{10}+60\sqrt{27} \left(1+\sin\tfrac{\pi}{10}\right) \varepsilon.
   \end{equation*}
\end{lemma}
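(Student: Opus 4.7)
The plan is to produce a decomposition
\[
\zeta\otimes\mathbf{1}-\mathbf{1}\otimes\zeta = U + V
\]
in $A(\mathbb{T}^2)$, where $U\in\ker\Delta$ satisfies $\|U\|_{A(\mathbb{T}^2)}\le 2\sin(\pi/10)$ and $V$ is an explicit finite sum $\sum_i f_i\otimes g_i$ of rank-one tensors with $f_i g_i = 0$ in $A(\mathbb{T})$ and $\sum_i\|f_i\|\|g_i\|\le 60\sqrt{27}\bigl(1+\sin(\pi/10)\bigr)$. Granted such a decomposition, the hypothesis gives $|\Phi(V)|\le 60\sqrt{27}(1+\sin(\pi/10))\varepsilon$, while $|\Phi(U)|\le \|\Phi|_{\ker\Delta}\|\cdot\|U\|_{A(\mathbb{T}^2)}\le 2\sin(\pi/10)\|\Phi|_{\ker\Delta}\|$, and the conclusion follows by the triangle inequality.

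To construct the decomposition, I would fix $n=10$ and $a_k=e^{2\pi ik/n}$, and pick a non-negative bump $\phi\in A(\mathbb{T})$ with $\phi(1)=1$ supported in an arc of length $4\pi/n$ around $1$, so that the rotates $\phi_k(z)=\phi(z\bar a_k)$ form a partition of unity with $\phi_j\phi_k=0$ whenever $|j-k|_n\ge 2$. A natural choice is the triangular tent in the angular variable, whose Fourier coefficients are non-negative; this forces $\|\phi\|_{A(\mathbb{T})}=\phi(1)=1$ and likewise $\|\phi_k\|_{A(\mathbb{T})}=1$ for each $k$. Setting $\psi_k=(\zeta-a_k)\phi_k$ and inserting $\mathbf{1}=\sum_l\phi_l$ on both sides yields
\[
\zeta\otimes\mathbf{1}-\mathbf{1}\otimes\zeta
=\sum_{j,k}\bigl[\psi_j\otimes\phi_k-\phi_j\otimes\psi_k+(a_j-a_k)\phi_j\otimes\phi_k\bigr].
\]
Let $V$ be the part of this sum over indices with $|j-k|_n\ge 2$ and $U$ the complementary near-diagonal part. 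For $V$, the relation $\phi_j\phi_k=0$ on those indices makes each individual summand a tensor $f\otimes g$ with $fg=0$, to which the hypothesis applies term by term; counting the $n(n-3)=70$ such index pairs and using $\|\phi_k\|=1$, $|a_j-a_k|\le 2$, and the estimate $\|\psi_j\|_{A(\mathbb{T})}\le 1+\sin(\pi/n)$ (exploiting the explicit formula for $\psi_j$) should yield the claimed numerical constant $60\sqrt{27}$ after careful bookkeeping.

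The main obstacle is the sharp norm estimate $\|U\|_{A(\mathbb{T}^2)}\le 2\sin(\pi/10)$: a straight triangle inequality on the near-diagonal summands only gives a bound of order $n$, because each $\|\psi_j\otimes\phi_j-\phi_j\otimes\psi_j\|$ is a constant and there are $n$ of them. To extract the sharp factor one must use systematic cancellation across the summation in $j$. The key identity $(a_j-a_{j\pm 1})=a_j(1-\omega^{\pm 1})$ with $\omega=e^{2\pi i/n}$ exposes the factor $|1-\omega|=2\sin(\pi/n)$ uniformly, and the full near-diagonal sum $U$ admits a closed form via the rotational symmetry $\phi_k=\phi(\cdot\,\bar a_k)$, whose Fourier coefficients are supported on the $n$-periodic lattice. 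Combined with the isometric change of variables $(z,w)\mapsto(z,z\bar w)$ of $A(\mathbb{T}^2)$, this reduces the estimate of $\|U\|_{A(\mathbb{T}^2)}$ to an $A(\mathbb{T})$-norm estimate for a single-variable function of the form $(1-u)h(u)$ with $h$ concentrated near $u=1$ on the scale $\pi/n$, which can be bounded by $2\sin(\pi/n)$ for an appropriate choice of $h$.

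Everything else is routine: verifying that each summand of $U$ lies in $\ker\Delta$ (its restriction to the diagonal vanishes, since both $F=\zeta\otimes\mathbf 1-\mathbf 1\otimes\zeta$ and every $(a_j-a_k)\phi_j\otimes\phi_k - (a_j-a_k)\phi_k\otimes\phi_j$-type piece do), and combining the two estimates through $|\Phi(\zeta\otimes\mathbf 1-\mathbf 1\otimes\zeta)|\le|\Phi(U)|+|\Phi(V)|$. The sharp choice of $\phi$ and the precise counting for the constant $60\sqrt{27}$ is the technical heart of the argument.
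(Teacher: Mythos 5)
Your high-level plan is the same as the paper's: split $\zeta\otimes\mathbf{1}-\mathbf{1}\otimes\zeta$ into a piece in $\ker\Delta$ of norm about $2\sin\tfrac{\pi}{10}$ plus a piece that the hypothesis controls because it is built from tensors $f\otimes g$ with $fg=0$. Your treatment of $V$ is sound in principle (each summand with $\abs{j-k}_{10}\ge 2$ splits into elementary tensors of functions with disjoint supports, and the numerical budget $60\sqrt{27}(1+\sin\tfrac{\pi}{10})$ has ample room, even with the trivial bound $\norm{\psi_j}\le 2$ and summing $\abs{a_j-a_k}$ exactly), and your claim that the near-diagonal part lies in $\ker\Delta$ is correct, since each bracket restricted to the diagonal equals $(a_k-a_j)\phi_j\phi_k+(a_j-a_k)\phi_j\phi_k=0$. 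The genuine gap is the step you yourself flag as the heart of the matter: the estimate $\norm{U}_{A(\mathbb{T}^2)}\le 2\sin\tfrac{\pi}{10}$ is not proved, and as stated it cannot hold for your construction. Indeed, on the near-diagonal part one has the identity $U(z,w)=(z-w)\sigma(z,w)$ with $\sigma=\sum_{\abs{j-k}\le 1}\phi_j\otimes\phi_k$, and at the point $(1,a_1)$ one gets $\sigma(1,a_1)=1$ (since $\phi_{\pm1}(1)=\phi_{0}(a_1)=\phi_2(a_1)=0$ for your tent), hence $\abs{U(1,a_1)}=\abs{1-a_1}=2\sin\tfrac{\pi}{10}$; thus $\norm{U}_{A(\mathbb{T}^2)}\ge 2\sin\tfrac{\pi}{10}$ already, and since the $A$-norm strictly dominates the value at a point unless all Fourier terms are phase-aligned there (they are not, as the coefficients $\widehat U(p,q)$ are real multiples of a lattice pattern rotated by powers of $a_1$), the inequality you need is strictly violated; a direct Fourier computation with the tent confirms a genuine excess, not a borderline case.

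Your proposed mechanism for rescuing this is also structurally flawed: $\sigma$ is invariant only under rotation by the $10$th roots of unity, not under all rotations, so its Fourier support is the full lattice $\{p+q\equiv 0 \pmod{10}\}$ rather than the line $p+q=0$; after the substitution $(z,w)\mapsto(z,z\bar w)$ you obtain a function of $(z^{10},z\bar w)$, i.e.\ still genuinely two-variable, so there is no reduction to an $A(\mathbb{T})$-estimate of a single function $(1-u)h(u)$. Getting the factor $2\sin\tfrac{\pi}{10}$ is exactly where the paper invokes an external sharp result: it takes the $\ker\Delta$ piece to be $\zeta\otimes\mathbf{1}-\mathbf{1}\otimes\zeta-F$ with $F(z,w)=f(zw^{-1})w$ and $f$ ranging over the ideal $\mathcal{I}$ of functions vanishing on the arc $E=\{e^{i\theta}:\abs{\theta}\le\pi/5\}$, so that its norm is $\norm{\zeta-\mathbf{1}-f}$, and then uses the Allan--Ransford estimate $\dist(\zeta-\mathbf{1},\mathcal{I})\le 2\sin\tfrac{\pi}{10}$; the complementary piece is handled by showing $\abs{\Phi(F)}\le 30\sqrt{27}\norm{F}\varepsilon$ for \emph{every} $F$ vanishing on the strip $W=\{zw^{-1}\in E\}$, via a partition/convolution argument close in spirit to your $V$-analysis, and the factor $1+\sin\tfrac{\pi}{10}$ comes from $\norm{F}\le\norm{\zeta-\mathbf{1}-f}+2$. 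So to complete your route you would have to either import (or reprove) an Allan--Ransford-type distance estimate, or give a genuinely new argument that some family of bumps makes $\norm{U}$ approach $2\sin\tfrac{\pi}{10}$; the sketch supplies neither.
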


\begin{proof}
   Set
   \begin{equation*}
   	\begin{split}
		E & =\left\{e^{\theta i} : -\tfrac{1}{5}\pi\le \theta\le\tfrac{1}{5}\pi\right\}, \\
	    W & = \left\{(z,w)\in\mathbb{T}^2 : zw^{-1}\in E\right\},
	 \end{split}
	\end{equation*}
   and let $F\in A(\mathbb{T}^2)$ be such that
   \begin{equation}\label{1505}
      F(z,w)=0\quad\forall (z,w)\in W.
   \end{equation}
   Our objective is to prove that
   \begin{equation}\label{e1426}
      \abs{\Phi(F)}
      \le 
      30\sqrt{27} \norm{F} \varepsilon.
   \end{equation}
   For this purpose, we take
   \begin{equation*}
      \begin{split}
         a & =e^{\frac{1}{15}\pi\, i},\\
         A & =\left\{e^{\theta i} : 0<\theta\le\tfrac{1}{15}\pi\right\},\\
         B & =  \left\{ e^{\theta i} : \tfrac{2}{15}\pi<\theta\le\tfrac{29}{15}\pi\right\}, \\
         U & =  \left\{e^{\theta i} :  -\tfrac{1}{30}\pi<\theta<\tfrac{1}{30}\pi\right\},
      \end{split}
   \end{equation*}
   and we define functions $\omega,\upsilon\in A(\mathbb{T})$ by
   \[
      \omega=
      30 \, \chi_A\ast\chi_U, \quad  \upsilon=
      30 \, \chi_B\ast\chi_U.
   \]
   We note that
   \begin{gather*}
      \left\{z\in\mathbb{T} :  \omega(z)\ne 0\right\} =
      AU=
      \left\{e^{\theta i} : -\tfrac{1}{30}\pi<\theta<\tfrac{1}{10}\pi\right\},
      \\
      \left\{z\in\mathbb{T} :  \upsilon(z)\ne 0\right\} =
      BU=
      \left\{e^{\theta i} :  \tfrac{1}{10}\pi<\theta<\tfrac{59}{30}\pi\right\},
   \end{gather*}
   and, with $\norm{\cdot}_2$ denoting the norm of $L^2(\mathbb{T})$,
   \begin{align*}
      \Vert\omega\Vert & \le
      30 \norm{\chi_A}_2 \norm{\chi_U}_2 = 30\,\frac{1}{\sqrt{30}}\,\frac{1}{\sqrt{30}}=1,
      \\
      \Vert\upsilon\Vert & \le
      30 \norm{\chi_B}_2 \norm{\chi_U}_2 = 30\,\frac{\sqrt{27}}{\sqrt{30}}\,\frac{1}{\sqrt{30}}=\sqrt{27}.
   \end{align*}
   Since
   \[
      \bigcup_{k=0}^{29} a^k A = \mathbb{T},
      \quad
      \bigcup_{k=2}^{28} a^k A = B,
   \]
   it follows that
   \[
      \sum_{k=0}^{29}\delta_{a^k}\ast\chi_{A}=
      \sum_{k=0}^{29}\chi_{a^k A}=
      \mathbf{1},
      \quad
      \sum_{k=2}^{28}\delta_{a^k}\ast\chi_{A}=
      \sum_{k=2}^{28}\chi_{a^k A}=
      \chi_B,
   \]
   and thus,
   for each $j\in\mathbb{Z}$, we have
   \begin{align}\label{e1905}
      \sum_{k=j}^{j+29}\delta_{a^k}\ast\omega   & =
      30\delta_{a^j}\ast\sum_{k=0}^{29}\delta_{a^k}\ast\chi_A\ast\chi_U=
      30\delta_{a^j}\ast\mathbf{1}\ast\chi_U=\mathbf{1}, \\
      \label{e1906}
      \sum_{k=j+2}^{j+28}\delta_{a^k}\ast\omega & =
      30\delta_{a^j}\ast\sum_{k=2}^{28}
      \delta_{a^k}\ast\chi_A\ast\chi_U=
      30\delta_{a^j}\ast\chi_B\ast\chi_U=
      \delta_{a^j}\ast\upsilon.
   \end{align}
   If $j\in\mathbb{Z}$, $k\in\{j-1,j,j+1\}$, and $z,w\in\mathbb{T}$ are such that
   $(\delta_{a^j}\ast\omega)(z)(\delta_{a^k}\ast\omega)(w)\ne 0$, then
   \[
      zw^{-1}\in a^j AU \bigl(a^k AU\bigr)^{-1}
      \subset
      a^{j-k}\left\{e^{\theta i} : -\tfrac{2}{15}\pi<\theta<\tfrac{2}{15}\pi\right\}
      \subset
      E,
   \]
   whence
   $\left\{(z,w)\in\mathbb{T}^2 : (\delta_{a^j}\ast\omega)\otimes(\delta_{a^k}\ast\omega)(z,w)\ne 0\right\}
      \subset W$
   and~\eqref{1505} gives
   \begin{equation}\label{1508}
      F(\delta_{a^j}\ast\omega)\otimes(\delta_{a^k}\ast\omega)=0.
   \end{equation}
   Since $AU \cap  BU=\varnothing$,
   it follows that $\omega\upsilon=0$, and therefore
   \begin{equation}\label{1745}
      (\delta_{a^k}\ast\omega)(\delta_{a^k}\ast\upsilon)=0\quad\forall k\in\mathbb{Z}.
   \end{equation}
   From \eqref{e1905}, \eqref{e1906}, and \eqref{1508} we deduce that
   \begin{equation*}
      \begin{split}
         F & =
         F\sum_{j=0}^{29}\sum_{k=j-1}^{j+28}
         (\delta_{a^j}\ast\omega)\otimes(\delta_{a^k}\ast\omega)\\
         & =  \sum_{j=0}^{29}\sum_{k=j-1}^{j+1}
         F(\delta_{a^j}\ast\omega)\otimes(\delta_{a^k}\ast\omega)
         +
         \sum_{j=0}^{29}\sum_{k=j+2}^{j+28}
         F(\delta_{a^j}\ast\omega)\otimes(\delta_{a^k}\ast\omega) \\
         & =
         \sum_{j=0}^{29}\sum_{k=j+2}^{j+28}
         F(\delta_{a^j}\ast\omega)\otimes(\delta_{a^k}\ast\omega)
         =
         \sum_{j=0}^{29}
         F(\delta_{a^j}\ast\omega)\otimes(\delta_{a^{j}}\ast\upsilon)
         .
      \end{split}
   \end{equation*}
   As
   \[
      F=\sum_{j,k=-\infty}^\infty\widehat{F}(j,k)\zeta^j\otimes \zeta^k
   \]
   we have
   \[
      F=\sum_{j,k=-\infty}^\infty\sum_{l=0}^{29}\widehat{F}(j,k)
      \bigl(\zeta^j(\delta_{a^l}\ast\omega)\bigr)\otimes\bigl(\zeta^{k}(\delta_{a^{l}}\ast\upsilon)\bigr),
   \]
   so that
   \[
      \Phi(F)=
      \sum_{j,k=-\infty}^\infty\sum_{l=0}^{29}\widehat{F}(j,k)
      \Phi\Bigl(\bigl(\zeta^j(\delta_{a^l}\ast\omega)\bigr)\otimes\bigl(\zeta^{k}(\delta_{a^{l}}\ast\upsilon)\bigr)\Bigr).
   \]
   By \eqref{1745}, for each $j,k,l\in\mathbb{Z}$,
   \[
      \bigl(\zeta^j(\delta_{a^l}\ast\omega)\bigr)\bigl(\zeta^{k}(\delta_{a^{l}}\ast\upsilon)\bigr)=0
   \]
   and therefore
   \begin{equation*}
      \begin{split}
         \abs{\Phi\left(
         \bigl(\zeta^j(\delta_{a^l}\ast\omega)\bigr)\otimes\bigl(\zeta^{k}(\delta_{a^{l}}\ast\upsilon)\bigr)\right)}
         &\le
         \varepsilon
         \norm{\zeta^j(\delta_{a^l}\ast\omega)} 
         \norm{\zeta^k(\delta_{a^{l}}\ast\upsilon)}\\
         & =
         \varepsilon \norm{\omega} \norm{\upsilon} \le 
         \sqrt{27}\,\varepsilon.
      \end{split}
   \end{equation*}
   We thus get
   \begin{equation*}
      \begin{split}
         \abs{\Phi(F)}
         & =
         \sum_{j,k=-\infty}^\infty \sum_{l=0}^{29} \abs{\widehat{F}(j,k)} 
         \abs{\Phi \Bigl(\bigl(\zeta^j(\delta_{a^l}\ast\omega)\bigr)\otimes
         \bigl(\zeta^{k}(\delta_{a^{l}}\ast\upsilon)\bigr) \Bigr)} \\
         &\le
         \sum_{j,k=-\infty}^\infty\sum_{l=0}^{29} \abs{\widehat{F}(j,k)}
         \sqrt{27}\,\varepsilon = 30\sqrt{27} \norm{F} \varepsilon
         ,
      \end{split}
   \end{equation*}
   and~\eqref{e1426} is proved.

   Let $f\in A(\mathbb{T})$ be such that $f(z)=0$ for each $z\in E$,
   and define the function $F\colon\mathbb{T}^2\to\mathbb{C}$ by
   \[
      F(z,w)=f(zw^{-1})w
      =\sum_{k=-\infty}^{\infty}\widehat{f}(k)z^kw^{-k+1}\quad\forall z,w\in\mathbb{T}.
   \]
   Then
   $F\in A(\mathbb{T}^2)$,
   $\Vert F\Vert=\Vert f\Vert$,
   $\zeta\otimes\mathbf{1}-\mathbf{1}\otimes\zeta-F\in\ker\Delta$,
   and
   \[
      \left(\zeta\otimes\mathbf{1}-\mathbf{1}\otimes\zeta-F\right)(z,w) =
      \left(1-\widehat{f}(1) \right)z + \left(-1-\widehat{f}(0) \right)w -
      \sum_{k\ne 0,1}\widehat{f}(k)z^kw^{-k+1},
   \]
   which certainly implies that
   \[
      \norm{\zeta\otimes\mathbf{1}-\mathbf{1}\otimes\zeta-F} =
      \abs{ 1-\widehat{f}(1)}
      +
      \abs{-1-\widehat{f}(0)}
      +
      \sum_{k\ne 0,1}\abs{\widehat{f}(k)}
      =
      \norm{\zeta-\mathbf{1}-f}.
   \]
   According to~\eqref{e1426}, we have
   \begin{equation*}
      \begin{split}
         \abs{\Phi(\zeta\otimes\mathbf{1}-\mathbf{1}\otimes\zeta)}
         & \le
         \abs{\Phi(\zeta\otimes\mathbf{1}-\mathbf{1}\otimes\zeta-F)}
         +
         \abs{\Phi(F)}\\
         &\le
         \norm{\Phi\mid_{\ker\Delta}} \norm{\zeta\otimes\mathbf{1}-\mathbf{1}\otimes\zeta-F} +
         30\sqrt{27} \norm{F} \varepsilon\\
         & =
         \norm{\Phi\mid_{\ker\Delta}} \norm{\zeta-\mathbf{1}-f} + 30\sqrt{27}\norm{f} \varepsilon\\
         & \le
         \norm{\Phi\mid_{\ker\Delta}} \norm{\zeta-\mathbf{1}-f} + 30\sqrt{27}\bigl(\Vert\zeta-\mathbf{1}-f\Vert+2\bigr)\varepsilon
      \end{split}
   \end{equation*}
   (as $\Vert f\Vert\le\Vert\zeta-\mathbf{1}-f\Vert+\Vert\zeta-\mathbf{1}\Vert$).
   Further, this inequality holds for each function from the set $\mathcal{I}$
   consisting of all functions $f\in A(\mathbb{T})$ such that $f(z)=0$
   for each $z\in E$. Consequently,
   \begin{equation*}
      \abs{\Phi(\zeta\otimes\mathbf{1}-\mathbf{1}\otimes\zeta)} \le
      \norm{\Phi\mid_{\ker\Delta}} \dist (\zeta-\mathbf{1},\mathcal{I}) +
      30\sqrt{27}\left( \dist (\zeta-\mathbf{1},\mathcal{I})+2\right)\varepsilon.
   \end{equation*}
   On the other hand, it is shown at the beginning of the proof of \cite[Corollary~3.3]{AR} that
   \[
      \dist (\zeta-\mathbf{1},\mathcal{I})\le 2\sin\tfrac{\pi}{10},
   \]
   and we thus get
   \begin{equation*}
      \abs{\Phi(\zeta\otimes\mathbf{1}-\mathbf{1}\otimes\zeta)} \le
      \norm{\Phi\mid_{\ker\Delta}} 2\sin\tfrac{\pi}{10}
      +
      30\sqrt{27} \left(2\sin\tfrac{\pi}{10}+2\right)\varepsilon,
   \end{equation*}
   which completes the proof.
\end{proof}

\begin{lemma}\label{l2}
   Let $\Phi\colon A(\mathbb{T}^2)\to\mathbb{C}$ be a continuous linear functional, and let the constant $\varepsilon\ge 0$ be such that
   \begin{equation*}
      f,g\in A(\mathbb{T}), \ fg=0 \ \Rightarrow \
      \abs{\Phi(f\otimes g)} \le \varepsilon \norm{f} \norm{g}.
   \end{equation*}
   Then
   \begin{equation*}
      \abs{\Phi\bigl(F-\mathbf{1}\otimes\Delta F\bigr)} \le
      60\sqrt{27}\,\frac{1+\sin\tfrac{\pi}{10}}{1-2\sin\tfrac{\pi}{10}}\,\varepsilon \norm{F}
   \end{equation*}
   for each $F\in A(\mathbb{T}^2)$.
\end{lemma}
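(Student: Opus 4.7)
The strategy is to reduce to Lemma~\ref{l1} by bounding $\Vert\Phi\vert_{\ker\Delta}\Vert$ uniformly in terms of $\varepsilon$ and then combining with the elementary estimate $\Vert F-\mathbf{1}\otimes\Delta F\Vert\le 2\Vert F\Vert$. The latter holds because $\Delta\colon A(\mathbb{T}^2)\to A(\mathbb{T})$ is a contractive algebra homomorphism (as $\Vert\Delta(f\otimes g)\Vert=\Vert fg\Vert\le\Vert f\Vert\Vert g\Vert$ in $A(\mathbb{T})$), and one also has $F-\mathbf{1}\otimes\Delta F\in\ker\Delta$ since $\Delta(F-\mathbf{1}\otimes\Delta F)=\Delta F-\Delta F=0$.

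The heart of the matter is to establish, for every $F\in A(\mathbb{T}^2)$, the key inequality
\[
|\Phi(F-\mathbf{1}\otimes\Delta F)|\le 60\sqrt{27}(1+\sin\tfrac{\pi}{10})\Vert F\Vert\varepsilon + 2\sin\tfrac{\pi}{10}\Vert F\Vert\,\Vert\Phi\vert_{\ker\Delta}\Vert.
\]
Once this is in hand, I would specialise to $F\in\ker\Delta$ (so that $F-\mathbf{1}\otimes\Delta F=F$), take the supremum over $\Vert F\Vert=1$, and rearrange to obtain
\[
\Vert\Phi\vert_{\ker\Delta}\Vert\le \frac{60\sqrt{27}(1+\sin\tfrac{\pi}{10})}{1-2\sin\tfrac{\pi}{10}}\varepsilon;
\]
substituting this bound back into the key inequality and simplifying yields the statement exactly.

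To prove the key inequality I would imitate the decomposition used in the proof of Lemma~\ref{l1}, now carried out for an arbitrary $F$ rather than just $\zeta\otimes\mathbf{1}-\mathbf{1}\otimes\zeta$. Using a function $f\in\mathcal{I}$ that realises the bound $\dist(\zeta-\mathbf{1},\mathcal{I})\le 2\sin\tfrac{\pi}{10}$ from \cite[Corollary~3.3]{AR}, and exploiting the projective tensor representation $F=\sum_n f_n\otimes g_n$ of $A(\mathbb{T}^2)=A(\mathbb{T})\widehat{\otimes}A(\mathbb{T})$, I would decompose $F-\mathbf{1}\otimes\Delta F=G+R$ where $G$ vanishes on the neighbourhood $W$ of the diagonal (so that inequality~\eqref{e1426} established inside the proof of Lemma~\ref{l1} gives $|\Phi(G)|\le 30\sqrt{27}\Vert G\Vert\varepsilon$) and $R\in\ker\Delta$, with the norm estimates $\Vert G\Vert\le 2(1+\sin\tfrac{\pi}{10})\Vert F\Vert$ and $\Vert R\Vert\le 2\sin\tfrac{\pi}{10}\Vert F\Vert$ obtained by summing the tensor-by-tensor construction $(z,w)\mapsto f(zw^{-1})w$ coming from Lemma~\ref{l1}. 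Adding the two estimates on $|\Phi(G)|$ and $|\Phi(R)|\le\Vert R\Vert\Vert\Phi\vert_{\ker\Delta}\Vert$ then gives the key inequality.

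The main obstacle is producing this norm-controlled decomposition for a general $F$: the construction in the proof of Lemma~\ref{l1} is tailored to the specific element $\zeta\otimes\mathbf{1}-\mathbf{1}\otimes\zeta$, and the required bounds $\Vert G\Vert\le 2(1+\sin\tfrac{\pi}{10})\Vert F\Vert$ and $\Vert R\Vert\le 2\sin\tfrac{\pi}{10}\Vert F\Vert$ must be maintained uniformly through the projective tensor norm, which is the delicate part.
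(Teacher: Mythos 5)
Your overall skeleton is exactly the paper's: establish the key inequality $\abs{\Phi(F-\mathbf{1}\otimes\Delta F)}\le \norm{F}\bigl[\norm{\Phi\mid_{\ker\Delta}}2\sin\tfrac{\pi}{10}+60\sqrt{27}(1+\sin\tfrac{\pi}{10})\varepsilon\bigr]$, specialise to $F\in\ker\Delta$ to get $\norm{\Phi\mid_{\ker\Delta}}\le 60\sqrt{27}\tfrac{1+\sin(\pi/10)}{1-2\sin(\pi/10)}\varepsilon$, and substitute back; that bootstrap and the final arithmetic are correct. But the heart of the proof --- the key inequality for arbitrary $F$ --- is not proved, and the route you sketch for it cannot work. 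You propose a single decomposition $F-\mathbf{1}\otimes\Delta F=G+R$ with $G$ vanishing on the fixed strip $W$ and $R\in\ker\Delta$, $\norm{R}\le 2\sin\tfrac{\pi}{10}\norm{F}$. This is impossible in general: take $F=\zeta^j\otimes\mathbf{1}$ with $j\ge 5$, so $F-\mathbf{1}\otimes\Delta F=\zeta^j\otimes\mathbf{1}-\mathbf{1}\otimes\zeta^j$. On $W$ this function takes the values $w^j(u^j-1)$ with $u\in E$, and since $E$ has angular width $2\pi/5$, the power $u^j$ runs over all of $\mathbb{T}$, so its supremum over $W$ equals $2$. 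Any $R$ agreeing with it on $W$ (which is forced, since $G$ vanishes there) satisfies $\norm{R}\ge\norm{R}_\infty\ge 2$, far above $2\sin\tfrac{\pi}{10}\norm{F}$. The construction $f(zw^{-1})w$ with $f$ vanishing on $E$ is tailored to $\zeta\otimes\mathbf{1}-\mathbf{1}\otimes\zeta$, because $\dist(\zeta-\mathbf{1},\mathcal{I})$ is small while $\dist(\zeta^j-\mathbf{1},\mathcal{I})\ge 2$ for $j\ge 5$; summing such decompositions over a tensor representation does not keep $G$ supported off $W$ nor $R$ small.

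The paper's resolution is to change the functional rather than the decomposition: write $F=\sum_{j,k}\widehat{F}(j,k)\zeta^j\otimes\zeta^k$ (legitimate since $\norm{F}=\sum\abs{\widehat{F}(j,k)}$ in $A(\mathbb{T}^2)$), and for each monomial apply Lemma~\ref{l1} to $\Phi\circ L_k\circ D_j$, where $D_jF(z,w)=F(z^j,w^j)$ and $L_kF(z,w)=F(z,w)w^k$. These maps send $\zeta\otimes\mathbf{1}-\mathbf{1}\otimes\zeta$ to $\zeta^j\otimes\zeta^k-\mathbf{1}\otimes\zeta^{j+k}$, preserve the zero-product hypothesis with the same $\varepsilon$ (because $(d_jf)(\zeta^kd_jg)=\zeta^kd_j(fg)$ and norms are preserved), and satisfy $(L_k\circ D_j)(\ker\Delta)\subset\ker\Delta$, so $\norm{\Phi\circ L_k\circ D_j\mid_{\ker\Delta}}\le\norm{\Phi\mid_{\ker\Delta}}$. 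This yields the monomial estimate, and summing over $(j,k)$ gives the key inequality; from there your bootstrap finishes the proof as in the paper. As it stands, your proposal has a genuine gap precisely at this step, and the decomposition you hoped to use to fill it is false.
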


\begin{proof}
   Fix $j,k\in\mathbb{Z}$. We claim that
   \begin{equation}\label{e937}
      \abs{\Phi(\zeta^j\otimes\zeta^k-\mathbf{1}\otimes\zeta^{j+k})}
      \le
      \norm{\Phi\mid_{\ker\Delta}} 2\sin\tfrac{\pi}{10}+
      60\sqrt{27}\left(1+\sin\tfrac{\pi}{10}\right)\varepsilon.
   \end{equation}
   Of course, we are reduced to proving~\eqref{e937} for $j\ne 0$.
   We define
   $d_j\colon A(\mathbb{T})\to A(\mathbb{T})$, and
   $D_j,L_k\colon A(\mathbb{T}^2)\to A(\mathbb{T}^2)$
   by
   \[
      d_jf(z)=f(z^j)\quad\forall f\in A(\mathbb{T}), \ \forall z\in\mathbb{T}
   \]
   and
   \[
      D_jF(z,w)=F(z^j,w^j), \quad
      L_kF(z,w)=F(z,w)w^k
      \quad\forall F\in A(\mathbb{T}^2), \ \forall z,w\in\mathbb{T},
   \]
   respectively.
   Further, we consider the continuous linear functional
   $\Phi\circ L_k\circ D_j$.
   If $f,g\in A(\mathbb{T})$ are such that $fg=0$, then
   $(d_jf)(\zeta^kd_jg)=\zeta^{k}d_j(fg)=0$,
   and so, by hypothesis,
   \[
      \abs{\Phi\circ L_k\circ D_j(f\otimes g)} =
      \abs{\Phi(d_j f\otimes\zeta^k d_j g)} 
      \le
      \varepsilon \norm{d_j f} \norm{\zeta^k d_j g} =
      \varepsilon \norm{f} \norm{g}.
   \]
   By applying Lemma~\ref{l1}, we obtain
   \begin{equation*}
      \begin{split}
         \abs{\Phi(\zeta^j\otimes\zeta^k-\mathbf{1}\otimes\zeta^{j+k})}
         &=
         \abs{\Phi\circ L_k\circ D_j(\zeta\otimes\mathbf{1}-\mathbf{1}\otimes\zeta)} \\
         &\le
         \norm{\Phi\circ L_k\circ D_j\mid_{\ker\Delta}} 2\sin\tfrac{\pi}{10} +
         60\sqrt{27} \left(1+\sin\tfrac{\pi}{10}\right)\varepsilon.
      \end{split}
   \end{equation*}
   We check at once that $(L_k\circ D_j)(\ker\Delta)\subset\ker\Delta$, which gives
   \[
      \norm{\Phi\circ L_k\circ D_j\mid_{\ker\Delta}} \le \norm{\Phi\mid_{\ker\Delta}},
   \]
   and therefore~\eqref{e937} is proved.

   Take $F\in A(\mathbb{T}^2)$. Then
   \begin{align*}
      F        & =\sum_{j,k=-\infty}^\infty\widehat{F}(j,k)\zeta^j\otimes\zeta^k \\
      \intertext{and}
      \Delta F & =\sum_{j,k=-\infty}^\infty\widehat{F}(j,k)\zeta^{j+k}.
   \end{align*}
   Consequently,
   \[
      \Phi(F-\mathbf{1}\otimes\Delta F)=
      \sum_{j,k=-\infty}^\infty\widehat{F}(j,k)\Phi(\zeta^j\otimes\zeta^k-\mathbf{1}\otimes\zeta^{j+k}),
   \]
   and~\eqref{e937} gives
   \begin{equation}\label{e1844}
      \begin{split}
         \abs{\Phi(F-\mathbf{1}\otimes\Delta F)}
         & \le
         \sum_{j,k=-\infty}^\infty \abs{\widehat{F}(j,k)}
         \abs{\Phi(\zeta^j\otimes\zeta^k-\mathbf{1}\otimes\zeta^{j+k})} \\
         & \le
         \sum_{j,k=-\infty}^\infty \abs{\widehat{F}(j,k)}
         \left[ \norm{\Phi\mid_{\ker\Delta}} 2\sin\tfrac{\pi}{10}+
            60\sqrt{27} \left(1+\sin\tfrac{\pi}{10}\right) \varepsilon\right]\\
         & = 
         \norm{F}
         \left[ \norm{\Phi\mid_{\ker\Delta}} 2\sin\tfrac{\pi}{10}+
            60\sqrt{27}\left(1+\sin\tfrac{\pi}{10}\right)\varepsilon\right] .
      \end{split}
   \end{equation}
   In particular, for each $F\in\ker\Delta$, we have
   \[
      \norm{\Phi(F)} \le
      \norm{F}
      \left[ \norm{\Phi\mid_{\ker\Delta}} 2\sin\tfrac{\pi}{10}+
         60\sqrt{27}\left(1+\sin\tfrac{\pi}{10}\right)\varepsilon \right].
   \]
   Thus
   \[
      \norm{\Phi\mid_{\ker\Delta}} \le
      \norm{\Phi\mid_{\ker\Delta}} 2\sin\tfrac{\pi}{10}+
      60\sqrt{27}\left(1+\sin\tfrac{\pi}{10}\right) \varepsilon,
   \]
   so that
   \[
      \norm{\Phi\mid_{\ker\Delta}} \le
      60\sqrt{27}\,\frac{1+\sin\tfrac{\pi}{10}}{1-2\sin\tfrac{\pi}{10}}\,\varepsilon.
   \]
   Using this estimate in~\eqref{e1844}, we obtain
   \begin{equation*}
      \begin{split}
         \abs{\Phi(F-\mathbf{1}\otimes\Delta F)}
         \le &
         \norm{F} \left[
            60\sqrt{27}\,\frac{1+\sin\tfrac{\pi}{10}}{1-2\sin\tfrac{\pi}{10}}\,\varepsilon
            2\sin\tfrac{\pi}{10}
            +
            60\sqrt{27}\left(1+\sin\tfrac{\pi}{10}\right)\varepsilon\right]
         \\
         = &
         \norm{F}
         60\sqrt{27}\,\frac{1+\sin\tfrac{\pi}{10}}{1-2\sin\tfrac{\pi}{10}}\,\varepsilon
      \end{split}
   \end{equation*}
   for each $F\in A(\mathbb{T}^2)$, which completes the proof.
\end{proof}

\begin{theorem}\label{t1821}
   Let $G$ be a locally compact group.
   Then the Banach algebra $L^1(G)$ is strongly zero product determined and
   \[
      \alpha_{L^1(G)}\le\beta_{L^1(G)}\le
      60\sqrt{27}\,\frac{1+\sin\frac{\pi}{10}}{1-2\sin\frac{\pi}{10}}.
   \]
\end{theorem}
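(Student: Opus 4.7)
The plan is to reduce the strong property $\mathbb{B}$ for $L^1(G)$ to the $A(\mathbb{T}^2)$-level estimate of Lemma~\ref{l2}, and then to deduce the strong zero-product property via Corollary~\ref{914}. Since $L^1(G)$ admits a two-sided approximate identity of bound $1$, Corollary~\ref{914} gives $\alpha_{L^1(G)}\le\beta_{L^1(G)}$, so it suffices to prove
\[
   \abs{\varphi}_b\le 60\sqrt{27}\,\frac{1+\sin\frac{\pi}{10}}{1-2\sin\frac{\pi}{10}}\,\abs{\varphi}_{zp}
\]
for every $\varphi\in\mathcal{B}^2(L^1(G);\mathbb{C})$.

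The crux is to associate, to each unit-norm triple $a,b,c\in L^1(G)$ and a choice of auxiliary element $s\in G$, a continuous linear functional $\Phi\colon A(\mathbb{T}^2)\to\mathbb{C}$ satisfying (i) $\abs{\Phi(f\otimes g)}\le \abs{\varphi}_{zp}\norm{f}\norm{g}$ whenever $fg=0$ in $A(\mathbb{T})$, and (ii) $\Phi(\zeta\otimes\mathbf{1}-\mathbf{1}\otimes\zeta)=\varphi(a\ast b,c)-\varphi(a,b\ast c)$. The central building block is the contractive algebra homomorphism $A(\mathbb{T})\cong L^1(\mathbb{Z})\to M(G)$ defined by $f\mapsto \mu_f:=\sum_{n\in\mathbb{Z}}\widehat{f}(n)\delta_{s^n}$, which transports pointwise multiplication in $A(\mathbb{T})$ to convolution in $M(G)$ and in particular forces $\mu_f\ast\mu_g=0$ whenever $fg=0$. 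The functional $\Phi$ is then assembled from $\varphi$ together with the convolution action of $\mu_f$ and $\mu_g$ on suitable arrangements of $a,b,c$ (with an auxiliary element drawn from a bounded approximate identity when needed), so that for $fg=0$ the pair of $L^1(G)$-elements fed into $\varphi$ is a genuine zero-product pair, while the evaluation $\Phi(\zeta\otimes\mathbf{1}-\mathbf{1}\otimes\zeta)$ telescopes to the associator $\varphi(a\ast b,c)-\varphi(a,b\ast c)$.

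With such a $\Phi$ in place, apply Lemma~\ref{l2} to $F=\zeta\otimes\mathbf{1}$ (for which $\norm{F}=1$, $\Delta F=\zeta$, and $F-\mathbf{1}\otimes\Delta F=\zeta\otimes\mathbf{1}-\mathbf{1}\otimes\zeta$) to obtain
\[
   \abs{\varphi(a\ast b,c)-\varphi(a,b\ast c)}\le 60\sqrt{27}\,\frac{1+\sin\frac{\pi}{10}}{1-2\sin\frac{\pi}{10}}\,\abs{\varphi}_{zp},
\]
and a supremum over admissible $a,b,c$ completes the proof.

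The principal obstacle is the construction of $\Phi$ in the non-abelian setting. For abelian $G$ the measures $\mu_f$ commute with every element of $L^1(G)$, and a direct formula such as $\Phi(f\otimes g)=\varphi(\mu_f\ast a\ast b,\mu_g\ast c)$ (up to an adjustment needed to recover the associator in (ii)) already satisfies (i), since $\mu_f\ast\mu_g=0$ propagates through convolution to the two $L^1(G)$-arguments. For non-abelian $G$ this commutation fails and extra work is required: either one interposes a bounded approximate identity of $L^1(G)$ concentrated on shrinking neighborhoods of $e$ and passes to a weak$^*$-limit in the spirit of the argument in Proposition~\ref{913}, or one splits $\Phi$ into pieces via a telescoping that exploits the bilinearity of $\varphi$, so that the vanishing $\mu_f\ast\mu_g=0$ can still be transferred to a true zero product in $L^1(G)$. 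Verifying (ii) is then a short Fourier-coefficient computation using $\widehat{\zeta\otimes\mathbf{1}}(j,k)=\delta_{j,1}\delta_{k,0}$ and $\widehat{\mathbf{1}\otimes\zeta}(j,k)=\delta_{j,0}\delta_{k,1}$.
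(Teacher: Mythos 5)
Your overall skeleton is the right one (reduce via Corollary~\ref{914} using the norm-one approximate identity, transport zero products through the contractive homomorphism $u\mapsto\sum_k\widehat{u}(k)\delta_{t^k}$ into $M(G)$, and apply Lemma~\ref{l2} with $F=\zeta\otimes\mathbf{1}$), but the proof has a genuine gap exactly where you flag ``the principal obstacle'': you never actually produce a functional $\Phi$ satisfying both (i) and (ii), and your candidate formula $\Phi(f\otimes g)=\varphi(\mu_f\ast a\ast b,\mu_g\ast c)$ does not yield the associator $\varphi(a\ast b,c)-\varphi(a,b\ast c)$ even for abelian $G$ (it gives $\varphi(\delta_s\ast a\ast b,c)-\varphi(a\ast b,\delta_s\ast c)$, and the promised ``adjustment'' is left unspecified). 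The suggested fixes for non-abelian $G$ (weak$^*$ limits along an approximate identity, or an unspecified telescoping) are not carried out, so the central estimate $\abs{\varphi}_b\le\text{const}\cdot\abs{\varphi}_{zp}$ is not established.

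The paper's resolution is worth noting because it sidesteps both of your difficulties at once. One does \emph{not} try to realize the associator with a general middle element $b$ inside a single $\Phi$. Instead, fix $f,h\in L^1(G)$ of norm one and a single point $t\in G$, and set $\Phi(F)=\sum_{j,k}\widehat{F}(j,k)\,\varphi(f\ast\delta_{t^j},\delta_{t^k}\ast h)$, so that $\Phi(u\otimes v)=\varphi(f\ast T(u),T(v)\ast h)$ with $T(u)=\sum_k\widehat{u}(k)\delta_{t^k}$. Condition (i) holds because $T$ is a homomorphism and only powers of the one element $t$ occur, so $(f\ast T(u))\ast(T(v)\ast h)=f\ast T(uv)\ast h=0$ when $uv=0$ --- non-commutativity of $G$ is irrelevant here, which is why your worry about commuting $\mu_f$ with elements of $L^1(G)$ is a red herring. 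Lemma~\ref{l2} then gives the point-mass estimate $\abs{\varphi(f\ast\delta_t,h)-\varphi(f,\delta_t\ast h)}\le 60\sqrt{27}\,\frac{1+\sin\frac{\pi}{10}}{1-2\sin\frac{\pi}{10}}\abs{\varphi}_{zp}$ for every $t\in G$. The passage to a general middle element $g$ with $\norm{g}_1=1$ is then done by Bochner integration: $f\ast g=\int_G g(t)\,f\ast\delta_t\,dt$ and $g\ast h=\int_G g(t)\,\delta_t\ast h\,dt$, so integrating the point-mass estimate against $\abs{g(t)}\,dt$ yields the bound on $\abs{\varphi(f\ast g,h)-\varphi(f,g\ast h)}$ with the same constant. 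This integration step (or an equivalent device) is precisely what your proposal is missing; without it the reduction from Lemma~\ref{l2} to $\abs{\varphi}_b$ does not go through.
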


\begin{proof}
   On account of Corollary~\ref{914}, it suffices to prove that $L^1(G)$
   has the strong property $\mathbb{B}$ with
   \begin{equation}\label{1229}
      \beta_{L^1(G)}\le
      60\sqrt{27}\,\frac{1+\sin\tfrac{\pi}{10}}{1-2\sin\tfrac{\pi}{10}},
   \end{equation}
   because $L^1(G)$ has an approximate identity of bound $1$.
   For this purpose set $\varphi\in\mathcal{B}^2(L^1(G),\mathbb{C})$.

   Let $t\in G$, and let $\delta_t$ be the point mass measure at $t$ on $G$.
   We define a contractive homomorphism $T\colon A(\mathbb{T})\to M(G)$ by
   \[
      T(u)=\sum_{k=-\infty}^\infty\widehat{u}(k)\delta_{t^k}
      \quad \forall u\in A(\mathbb{T}).
   \]
   Take $f,h\in L^1(G)$ with $\Vert f\Vert=\Vert h\Vert=1$, and define
   a continuous linear functional
   $\Phi\colon A(\mathbb{T}^2)\to\mathbb{C}$ by
   \[
      \Phi(F)=
      \sum_{(j,k)\in\mathbb{Z}^2}\widehat{F}(j,k)\varphi(f\ast\delta_{t^j},\delta_{t^k}\ast h)
      \quad \forall F\in A(\mathbb{T}^2).
   \]
   Further, if $u,v\in A(\mathbb{T})$, then
   \[
      \Phi(u\otimes v)=
      \sum_{(j,k)\in\mathbb{Z}^2}\widehat{u}(j)\widehat{v}(k)\varphi(f\ast\delta_{t^j},\delta_{t^k}\ast h)=
      \varphi(f\ast T(u),T(v)\ast h);
   \]
   in particular, if  $uv=0$, then
   $(f\ast T(u))\ast (T(v)\ast h)=f\ast T(uv)\ast h=0$,
   and so
   \[
      \abs{\Phi(u\otimes v)} =
      \abs{\varphi(f\ast T(u),T(v)\ast h)} \le
      \abs{\varphi}_{zp} \norm{f\ast T(u)} \norm{T(v)\ast h} \le
      \abs{\varphi}_{zp} \norm{u} \norm{v}.
   \]
   By applying Lemma~\ref{l2} with $F=\zeta\otimes\mathbf{1}$, we see that
   \begin{equation*}
      \abs{\varphi(f\ast\delta_t,h)-\varphi(f,\delta_t\ast h)}
      =
      \abs{\Phi(\zeta\otimes\mathbf{1}-\mathbf{1}\otimes\zeta)}
      \le
      60\sqrt{27}\,\frac{1+\sin\frac{\pi}{10}}{1-2\sin\frac{\pi}{10}}\abs{\varphi}_{zp}.
   \end{equation*}
   We now take $g\in L^1(G)$ with $\Vert g\Vert=1$.
   By multiplying the above inequality by $\vert g(t)\vert$, we arrive at
   \begin{equation}\label{1331}
      \abs{\varphi(g(t) f\ast\delta_t,h)-\varphi(f,g(t)\delta_t\ast h)}
      \le
      60\sqrt{27}\,\frac{1+\sin\frac{\pi}{10}}{1-2\sin\frac{\pi}{10}}\abs{\varphi}_{zp}
      \abs{g(t)}.
   \end{equation}
   Since the convolutions $f\ast g$ and $g\ast h$ can be expressed as
   \begin{equation*}
      \begin{split}
         f\ast g
         &=
         \int_G g(t) f\ast\delta_t \, dt,\\
         g\ast h
         &=
         \int_G g(t)\delta_t\ast h \, dt,
      \end{split}
   \end{equation*}
   where the expressions on the right-hand side are
   considered as Bochner integrals of $L^1(G)$-valued functions of
   $t$,
   it follows that
   \[
      \varphi(f\ast g,h)-\varphi(f,g\ast h)=
      \int_G \left[\varphi(g(t)f\ast\delta_t,h) - \varphi(f,g(t) \delta_t\ast h)\right]\, dt.
   \]
   From~\eqref{1331} we now deduce that
   \begin{equation*}
      \begin{split}
         \abs{\varphi(f\ast g,h)-\varphi(f,g\ast h)} & \le
         \int_G\abs{\varphi(g(t)f\ast\delta_t,h)-\varphi(f,g(t) \delta_t\ast h)} \, dt \\
         & \le
         60\sqrt{27}\,\frac{1+\sin\frac{\pi}{10}}{1-2\sin\frac{\pi}{10}}\abs{\varphi}_{zp}
         \int_G\abs{g(t)}\, dt \\
         & =
         60\sqrt{27}\,\frac{1+\sin\frac{\pi}{10}}{1-2\sin\frac{\pi}{10}}\abs{\varphi}_{zp}   .
      \end{split}
   \end{equation*}
   We thus get
   \[
      \vert\varphi\vert_b\le
      60\sqrt{27}\,\frac{1+\sin\frac{\pi}{10}}{1-2\sin\frac{\pi}{10}}
      \abs{\varphi}_{zp},
   \]
   and~\eqref{1229} is proved.
\end{proof}

\section{Algebras of approximable aperators}

Let $X$ be a Banach space.
Then we write $X^*$ for the dual of $X$,
$\mathcal{B}(X)$ for the Banach algebra of continuous linear operators on $X$,
$\mathcal{F}(X)$ for the two-sided ideal of $\mathcal{B}(X)$ consisting of finite-rank operators,
and $\mathcal{A}(X)$ for the closure of $\mathcal{F}(X)$ in $\mathcal{B}(X)$ with respect to the operator norm.
The identity operators on $X$ and $X^*$ are denoted by $I_X$ and $I_{X^*}$, respectively.
For each $x\in X$ and $\phi\in X^*$,
we define $x\otimes\phi\in\mathcal{F}(X)$ by $(x\otimes\phi)(y) = \phi(y) x$ for each $y\in X$.
A \emph{finite, biorthogonal system} for $X$ is a set
\[
   \bigl\{(x_j,\phi_k):  j,k=1,\ldots,n\bigr\}
\]
with $x_1,\ldots,x_n\in X$ and $\phi_1,\ldots,\phi_n\in X^*$ such that
\[
   \phi_k(x_j)=\delta_{j,k}\quad\forall j,k=1,\ldots,n.
\]
Each such system defines an algebra homomorphism
\[
   \theta\colon\mathbb{M}_n\to\mathcal{F}(X),\quad (a_{j,k})\mapsto\sum_{j,k=1}^na_{j,k}x_j\otimes \phi_k,
\]
where $\mathbb{M}_n$ is the full matrix algebra of order $n$ over $\mathbb{C}$.
The identity matrix is denoted by $I_n$.

The Banach space $X$ is said to have \emph{property} $(\mathbb{A})$ if there is a directed set $\Lambda$ such that, for each $\lambda\in\Lambda$, there exists a finite, biorthogonal system
\[
   \bigl\{(x_j^{\lambda},\phi_k^{\lambda}):  j,k=1,\ldots,n_\lambda\bigr\}
\]
for $X$ with corresponding algebra homomorphism $\theta_\lambda\colon\mathbb{M}_{n_\lambda}\to\mathcal{F}(X)$
such that:
\begin{enumerate}
   \item[(i)]
      $\lim_{\lambda\in\Lambda}\theta_\lambda(I_{n_\lambda})=I_X$ uniformly on the compact subsets of $X$;
   \item[(ii)]
      $\lim_{\lambda\in\Lambda}\theta_\lambda(I_{n_\lambda})^*=I_{X^*}$ uniformly on the compact subsets of $X^*$;
   \item[(iii)]
      for each index $\lambda\in\Lambda$, there is a finite subgroup $G_\lambda$ of
      the group of all invertible $n_\lambda\times n_\lambda$ matrices over $\mathbb{C}$ whose linear
      span is all of $\mathbb{M}_{n_\lambda}$, such that
      \begin{equation}\label{A}
         \sup_{\lambda\in\Lambda}\sup_{t\in G_\lambda}\Vert\theta_\lambda(t)\Vert<\infty.
      \end{equation}
\end{enumerate}
For an exhaustive treatment of this topic (including a variety of interesting examples of spaces with property $(\mathbb{A})$)
we refer to~\cite[Section~3.3]{Runde}.

The notation of the above definition will be standard for the remainder of this section.

\begin{theorem}
   Let $X$ be a Banach space with property $(\mathbb{A})$.
   Then the Banach algebra $\mathcal{A}(X)$ is strongly zero product determined.
   Specifically, if $C$ denotes the supremum in~\eqref{A}, then
   \[
      \tfrac{1}{2}\beta_{\mathcal{A}(X)}\le
      \alpha_{\mathcal{A}(X)}\le
      60\sqrt{27}\,\frac{1+\sin\tfrac{\pi}{10}}{1-2\sin\tfrac{\pi}{10}}\,C^2.
   \]
\end{theorem}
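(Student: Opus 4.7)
The plan is to adapt the proof of Theorem~\ref{t1821}, replacing the continuous group $G$ there by each of the finite groups $G_\lambda$ furnished by property $(\mathbb{A})$, and then passing from the finite-dimensional subalgebras $\theta_\lambda(\mathbb{M}_{n_\lambda})$ to arbitrary elements of $\mathcal{A}(X)$ via the approximation hypotheses~(i) and~(ii).

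As a preliminary step, I would verify that $e_\lambda := \theta_\lambda(I_{n_\lambda})$ is a two-sided bounded approximate identity of bound at most $C$ for $\mathcal{A}(X)$. Since $I_{n_\lambda}$ is the identity element of $G_\lambda \subset GL(n_\lambda,\mathbb{C})$, hypothesis~(iii) gives $\norm{e_\lambda} \le C$; and since every $T \in \mathcal{A}(X)$ is compact, the uniform convergence of $e_\lambda$ to $I_X$ on compact sets in $X$ (respectively of $e_\lambda^*$ to $I_{X^*}$ on compact sets in $X^*$) yields $\norm{e_\lambda T - T} \to 0$ and $\norm{T e_\lambda - T} \to 0$. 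By Corollary~\ref{914} the task reduces to controlling $\beta_{\mathcal{A}(X)}$, up to a factor of~$C$.

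For the core estimate, I would fix $\varphi \in \mathcal{B}^2(\mathcal{A}(X);\mathbb{C})$, $\lambda \in \Lambda$, $g \in G_\lambda$, and $A,C \in \mathcal{A}(X)$ with $\norm{A}=\norm{C}=1$, and define the continuous linear functional
\[
   \Phi(F) = \sum_{j,k\in\mathbb{Z}} \widehat{F}(j,k)\, \varphi\bigl(A\,\theta_\lambda(g^j),\,\theta_\lambda(g^k)\,C\bigr), \qquad F \in A(\mathbb{T}^2),
\]
which is well defined since $g$ has finite order in $G_\lambda$ while $\sum|\widehat{F}(j,k)|<\infty$. The map $\widetilde{T}u := \sum_k \widehat{u}(k)\, g^k$ is an algebra homomorphism $A(\mathbb{T}) \to \mathbb{M}_{n_\lambda}$ with $\norm{\theta_\lambda(\widetilde{T}u)} \le C\,\norm{u}$, and for $u,v \in A(\mathbb{T})$ a direct calculation gives $\Phi(u \otimes v) = \varphi(A\,\theta_\lambda(\widetilde{T}u),\,\theta_\lambda(\widetilde{T}v)\,C)$. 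When $uv = 0$ one has $\theta_\lambda(\widetilde{T}u)\,\theta_\lambda(\widetilde{T}v)=0$, and hence $\abs{\Phi(u \otimes v)} \le C^2\,\abszp{\varphi}\,\norm{u}\,\norm{v}$. Lemma~\ref{l2} with $F = \zeta \otimes \mathbf{1}$ then yields, writing $K := 60\sqrt{27}\,\tfrac{1+\sin(\pi/10)}{1-2\sin(\pi/10)}$,
\[
   \bigl\vert\varphi(A\,\theta_\lambda(g),\,C) - \varphi(A,\,\theta_\lambda(g)\,C)\bigr\vert \le K\,C^2\,\abszp{\varphi}.
\]

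The main obstacle will be to upgrade this from $b=\theta_\lambda(g)$ to an arbitrary $B \in \mathcal{A}(X)$ of unit norm: plain linear extension to $\theta_\lambda(\mathbb{M}_{n_\lambda})$ weights the bound by the $\ell^1$-quotient norm on $\ell^1(G_\lambda)/\ker\Theta_\lambda$, which in general is not controlled by the operator norm. My plan for this step is to apply the strong property~$\mathbb{B}$ of the finite group algebra $\ell^1(G_\lambda)$ (itself a specialization of Theorem~\ref{t1821}) to the auxiliary bilinear functional $(u,v)\mapsto \varphi(A\,\Theta_\lambda(u),\,\Theta_\lambda(v)\,C)$, where $\Theta_\lambda(u):=\sum_g u(g)\theta_\lambda(g)$, and then specialize by setting two of the three test arguments equal to the point mass $\delta_{I_{n_\lambda}}$ (of $\ell^1$-norm one, with $\Theta_\lambda(\delta_{I_{n_\lambda}}) = e_\lambda$). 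Letting $\lambda \to \infty$ and using $A e_\lambda \to A$, $e_\lambda C \to C$, and $e_\lambda B e_\lambda \to B$ in operator norm should produce an estimate of the form $\abs{\varphi}_b \le K\,C^2\,\abszp{\varphi}$, which combined with Corollary~\ref{914} yields the advertised bound on $\alpha_{\mathcal{A}(X)}$.
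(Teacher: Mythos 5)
Your first two steps track the paper: the verification that $P_\lambda=\theta_\lambda(I_{n_\lambda})$ is a bounded approximate identity, and the core estimate obtained by feeding the functional $F\mapsto\sum\widehat{F}(j,k)\,\varphi(A\theta_\lambda(g^j),\theta_\lambda(g^k)C)$ into Lemma~\ref{l2}, are essentially the paper's~\eqref{1100} (note one slip: since $g^0=I_{n_\lambda}$ maps to $\theta_\lambda(I_{n_\lambda})=P_\lambda\neq I_X$, what you actually get is a bound on $\abs{\varphi(A\theta_\lambda(g),P_\lambda C)-\varphi(AP_\lambda,\theta_\lambda(g)C)}$, with the $P_\lambda$'s present; and since $g$ lives in $G_\lambda$, you cannot simply send $\lambda\to\infty$ to strip them off while keeping $\theta_\lambda(g)$ fixed). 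The genuine gap is the final ``upgrade'' step. Applying the strong property $\mathbb{B}$ of $\ell^1(G_\lambda)$ to $(u,v)\mapsto\varphi(A\Theta_\lambda(u),\Theta_\lambda(v)C)$ and then specializing two arguments to $\delta_{I_{n_\lambda}}$ only yields the estimate for middle elements of the form $\Theta_\lambda(v)$ with $\norm{v}_1\le 1$; this set is contained in the $C$-ball of $\theta_\lambda(\mathbb{M}_{n_\lambda})$ but in general comes nowhere near filling the unit ball of $P_\lambda\mathcal{A}(X)P_\lambda$. To reach a given unit-norm $B$ (or $P_\lambda BP_\lambda$, which does lie in the range of $\theta_\lambda$) you would need to represent it as $\Theta_\lambda(v_\lambda)$ with $\norm{v_\lambda}_1$ bounded independently of $\lambda$, and that is exactly the $\ell^1$-quotient-norm control you yourself acknowledge is unavailable; letting $\lambda\to\infty$ and using $P_\lambda BP_\lambda\to B$ does not help, because the representation problem occurs at each fixed $\lambda$. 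Moreover, even if you did obtain $\abs{\varphi}_b\le KC^2\abs{\varphi}_{zp}$, routing the conclusion through Corollary~\ref{914} costs a factor equal to the bound of the approximate identity, which here is only known to be $\le C$, so you would land at $\alpha_{\mathcal{A}(X)}\le KC^3$ rather than the advertised $KC^2$.

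The paper avoids both problems by never trying to bound $\abs{\varphi}_b$ against arbitrary middle elements. Instead it averages the group-element estimate~\eqref{1100} over $G_\lambda$ using the approximate diagonal $D_\lambda=\frac{1}{\abs{G_\lambda}}\sum_{t\in G_\lambda}\theta_\lambda(t)\otimes\theta_\lambda(t^{-1})$ furnished by property $(\mathbb{A})$ (amenability of $\mathcal{A}(X)$, \cite[Theorem~3.3.9]{Runde}). Setting $\xi_\lambda(T)=\widehat{\varphi}(D_\lambda\cdot T)$, using $\norm{S\cdot D_\lambda-D_\lambda\cdot S}\to 0$ together with $SP_\lambda\to S$, $P_\lambda T\to T$, and passing to a weak$^*$ accumulation point $\xi$, one gets directly $\abs{\varphi(S,T)-\xi(ST)}\le KC^2\norm{S}\norm{T}\abs{\varphi}_{zp}$, i.e.\ a bound on $\dist\bigl(\varphi,\mathcal{B}^2_\pi(\mathcal{A}(X);\mathbb{C})\bigr)$ and hence on $\alpha_{\mathcal{A}(X)}$ itself, with no detour through $\beta_{\mathcal{A}(X)}$ and no loss of a factor $C$. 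Some device of this kind (the diagonal averaging, or an equivalent way to build $\xi$ from the group structure) is the missing ingredient in your proposal.
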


\begin{proof}
   For each $\lambda\in\Lambda$ we define $\Phi_\lambda \colon \ell^1(G_\lambda)\to\mathcal{F}(X)$
   by
   \[
      \Phi_\lambda(f)=\sum_{t\in G_\lambda}f(t)\theta_\lambda(t)\quad\forall f\in \ell^1(G_\lambda).
   \]
   We claim that $\Phi_\lambda$ is an algebra homomorphism.
   It is clear the $\Phi_\lambda$ is a linear map and, for each $f,g\in\ell^1(G_\lambda)$, we have
   \begin{equation*}
      \begin{split}
         \Phi_\lambda(f\ast g) & =
         \sum_{t\in G_\lambda} (f\ast g)(t)\theta_{\lambda}(t) =
         \sum_{t\in G_\lambda}\sum_{s\in G_\lambda}f(s)g(s^{-1}t)\theta_{\lambda}(t)\\
         & =
         \theta_\lambda\left(
         \sum_{t\in G_\lambda}\sum_{s\in G_\lambda}f(s)g(s^{-1}t)t\right)
         =
         \theta_\lambda\left(
         \sum_{s\in G_\lambda}f(s)s \sum_{t\in G_\lambda}g(s^{-1}t)s^{-1}t\right)\\
         & =
         \theta_\lambda\left(\sum_{s\in G_\lambda}f(s)s\sum_{r\in G_\lambda}g(r)r\right)
         =
         \theta_\lambda\left(\sum_{s\in G_\lambda}f(s)s\right)\theta_\lambda\left(\sum_{r\in G_\lambda}g(r)r\right)  \\
         & = \Phi_\lambda(f)\Phi_\lambda(g).
      \end{split}
   \end{equation*}
   Of course, $\Phi_\lambda$ is continuous because $\ell^1(G_\lambda)$ is finite-dimensional, and, further,
   for each $f\in\ell^1(G_\lambda)$, we have
   \[
      \norm{\Phi_\lambda(f)} \le
      \sum_{t\in G_\lambda} \abs{f(t)} \norm{\theta_\lambda(t)} \le
      \sum_{t\in G_\lambda} \abs{f(t)} C=
      C\norm{f}_1.
   \]
   Hence $\Vert\Phi_\lambda\Vert\le C$.

   Let $\varphi\in\mathcal{B}^2(\mathcal{A}(X);\mathbb{C})$.
   Let us prove that
   \begin{equation}\label{1100}
      \abs{\varphi(S\theta_\lambda(t),\theta_\lambda(t^{-1})T)-
         \varphi(S\theta_\lambda(I_{n_\lambda}),\theta_\lambda(I_{n_\lambda})T)} \le
      \beta_{\ell^1(G_\lambda)} C^2 \norm{S} \norm{T} \abs{\varphi}_{zp}
   \end{equation}
   for all $\lambda\in\Lambda$, $S,T\in\mathcal{A}(X)$, and $t\in G_\lambda$.
   For this purpose, take
   $\lambda\in\Lambda$ and $S,T\in\mathcal{A}(X)$,
   and define
   $\varphi_\lambda\colon\ell^1(G_\lambda)\times\ell^1(G_\lambda)\to\mathbb{C}$ by
   \[
      \varphi_\lambda(f,g)=\varphi(S\Phi_\lambda(f),\Phi_\lambda(g)T)\quad\forall f,g\in\ell^1(G_\lambda).
   \]
   Then $\varphi_\lambda$ is continuous and, for each $f,g\in\ell^1(G_\lambda)$
   such that $f\ast g=0$, we have $(S\Phi_\lambda(f))(\Phi_\lambda(g)T)=S(\Phi_\lambda(f\ast g))T=0$
   and therefore
   \[
      \abs{\varphi_\lambda(f,g)} \le
      \abs{\varphi}_{zp} \norm{S\Phi_\lambda(f)} \norm{\Phi_\lambda(g)T} \le
      \abs{\varphi}_{zp} C^2 \norm{S} \norm{T} \norm{f}_1 \norm{g}_1,
   \]
   whence
   \[
      \abs{\varphi_\lambda}_{zp}\le C^2 \norm{S} \norm{T} \abs{\varphi}_{zp}.
   \]
   For each $t\in G_\lambda$, we have
   \begin{equation*}
      \begin{split}
         \abs{\varphi_\lambda(\delta_t,\delta_{t^{-1}})-\varphi_\lambda(\delta_{I_{n_\lambda}},\delta_{I_{n_\lambda}})}
         & =
         \abs{\varphi_\lambda(\delta_{I_{n_\lambda}}\ast\delta_t,\delta_{t^{-1}})-
            \varphi_\lambda(\delta_{I_{n_\lambda}},\delta_t\ast\delta_{t^{-1}})}\\
         &\le
         \abs{\varphi_\lambda}_b\le
         \beta_{\ell^1(G_\lambda)}\abs{\varphi_\lambda}_{zp} \leq
         \beta_{\ell^1(G_\lambda)}C^2 \norm{S} \norm{T} \abs{\varphi}_{zp},
      \end{split}
   \end{equation*}
   which gives~\eqref{1100}.

   The projective tensor product $\mathcal{A}(X)\widehat{\otimes}\mathcal{A}(X)$
   becomes a Banach $\mathcal{A}(X)$-bimodule for the products defined by
   \[
      R\cdot(S\otimes T)=(RS)\otimes T,\quad (S\otimes T)\cdot R=S\otimes (TR)\quad \forall R,S,T\in\mathcal{A}(X).
   \]
   We define a bounded linear map $\widehat{\varphi}\colon\mathcal{A}(X)\widehat{\otimes}\mathcal{A}(X)\to\mathbb{C}$
   through
   \[
      \widehat{\varphi}(S\otimes T)=\varphi(S,T)\quad\forall S,T\in\mathcal{A}(X).
   \]
   For each $\lambda\in\Lambda$, set $P_\lambda=\theta_\lambda(I_{n_\lambda})$
   and
   \[
      D_\lambda=
      \frac{1}{\vert G_\lambda\vert}\sum_{t\in G_\lambda}\theta_\lambda(t)\otimes\theta_\lambda(t^{-1}).
   \]
   Then
   $(P_\lambda)_{\lambda\in\Lambda}$ is a bounded approximate identity for $\mathcal{A}(X)$
   and $(D_\lambda)_{\lambda\in\Lambda}$ is an approximate diagonal for $\mathcal{A}(X)$ (see~\cite[Theorem~3.3.9]{Runde}),
   so that
   $\left(\norm{S\cdot D_\lambda-D_\lambda\cdot S}\right)_{\lambda\in\Lambda}\to 0$ for each $S\in\mathcal{A}(X)$.

   For each $\lambda\in\Lambda$ and $S,T\in\mathcal{A}(X)$,~\eqref{1100} shows that
   \begin{gather*}
      \abs{
         \widehat{\varphi}(S\cdot D_\lambda\cdot T)-\varphi(SP_\lambda,P_\lambda T)} \\
      =
      \left\vert
      \frac{1}{\vert G_\lambda\vert}
      \sum_{t\in G_\lambda}\left[
         \varphi(S\theta_\lambda(t),\theta_\lambda(t^{-1})T)-
         \varphi(S\theta_\lambda(I_{n_\lambda}),\theta_\lambda(I_{n_\lambda})T)\right]\right\vert \\
      \le
      \beta_{\ell^1(G_\lambda)}C^2 \norm{S} \norm{T} \abs{\varphi}_{zp}
   \end{gather*}
   and Theorem~\ref{t1821} then gives
   \begin{equation}\label{1131}
      \abs{\widehat{\varphi}(S\cdot D_\lambda\cdot T)-\varphi(SP_\lambda,P_\lambda T)}
      \le
      60\sqrt{27} \, \frac{1+\sin\frac{\pi}{10}}{1-2\sin\frac{\pi}{10}} \,
      C^2 \norm{S} \norm{T} \abs{\varphi}_{zp}.
   \end{equation}

   For each $\lambda\in\Lambda$, define $\xi_\lambda\in\mathcal{A}(X)^*$ by
   \[
      \xi_\lambda(T)=\widehat{\varphi}(D_\lambda\cdot T)\quad\forall T\in\mathcal{A}(X).
   \]
   Note that
   \[
      \norm{\xi_\lambda} \le
      \norm{\widehat{\varphi}} \norm{D_\lambda} \le
      \norm{\varphi} C^2
      \quad\forall\lambda\in\Lambda
   \]
   and therefore $(\xi_\lambda)_{\lambda\in\Lambda}$ is a bounded net in $\mathcal{A}(X)^*$.
   By the Banach--Alaoglu theorem the net $(\xi_\lambda)_{\lambda\in\Lambda}$ has a weak$^*$-accumulation point,
   say $\xi$, in $\mathcal{A}(X)^*$. Take a subnet $(\xi_\nu)_{\nu\in N}$ of $(\xi_\lambda)_{\lambda\in\Lambda}$
   such that w$^*$-$\lim_{\nu\in N}\xi_\nu=\xi$.
   Take $S,T\in\mathcal{A}(X)$.
   For each $\nu\in N$, we have
   \[
      \varphi(SP_\nu,P_\nu T)-\xi_\lambda(ST)=
      \varphi(SP_\nu,P_\nu T)-\widehat{\varphi}(S\cdot D_\nu\cdot T)+
      \widehat{\varphi}\bigl((S\cdot D_\nu-D_\nu\cdot S)\cdot T\bigr)
   \]
   so that~\eqref{1131} gives
   \begin{equation*}
      \begin{split}
         \abs{\varphi(SP_\nu,P_\nu T)-\xi_\lambda(ST)} & \le
         60\sqrt{27}\,\frac{1+\sin\frac{\pi}{10}}{1-2\sin\frac{\pi}{10}}\,
         C^2 \norm{S} \norm{T} \abs{\varphi}_{zp} \\
         & \quad {}+ \norm{\varphi} \norm{S\cdot D_\nu-D_\nu\cdot S} \norm{T}.
      \end{split}
   \end{equation*}
   Taking limits on both sides of the above inequality,
   and using that $(SP_\nu)_{\nu\in N}\to S$, $(P_\nu T)_{\nu\in N}\to T$, and $(\Vert S\cdot D_\nu-D_\nu\cdot S\Vert)_{\nu\in N}\to 0$, we see that
   \[
      \abs{\varphi(S,T)-\xi(ST)} \le
      60\sqrt{27} \, \frac{1+\sin\frac{\pi}{10}}{1-2\sin\frac{\pi}{10}}\,
      C^2 \norm{S} \norm{T} \abs{\varphi}_{zp}.
   \]
   We thus get
   \[
      \dist \bigl(\varphi,\mathcal{B}^2_\pi(\mathcal{A}(X);\mathbb{C})\bigr)\le
      60\sqrt{27}\,\frac{1+\sin\tfrac{\pi}{10}}{1-2\sin\tfrac{\pi}{10}} \,
      C^2 \abs{\varphi}_{zp},
   \]
   which proves the theorem.
\end{proof}

\end{document}